\newcommand{\R}{\mathds{R}}
\newcommand{\Z}{\mathds{Z}}
\newcommand{\N}{\mathds{N}}
\renewcommand{\P}[1]{\mathds{P}\left(#1\right)}
\newcommand{\ind}{\mathds{1}}
\newcommand{\E}[1]{\mathds{E}\left[#1\right]}
\newcommand{\itg}{\displaystyle\int}
\newcommand{\cond}{\ \middle| \ }
\newcommand{\M}{\bm{M}}
\newcommand{\F}{\mathcal{F}}
\newcommand{\Ge}[2]{\bm{\Gel(} #1,#2 \bm{)} }
\newcommand{\T}{\mathcal{T}}
\newcommand{\U}{\mathcal{U}}
\newcommand{\Pois}{\mathscr{P}\!\mathscr{o}\!\mathscr{i}\!\mathscr{s}}
\newcommand{\Vvert}{\vert \hspace{-1pt} \vert \hspace{-1pt} \vert}
\DeclareMathOperator{\card}{Card}
\DeclareMathOperator{\spr}{SpR}
\DeclareMathOperator{\Gel}{Ge}
\DeclareMathOperator{\tp}{tp}
\DeclareMathOperator{\bd}{bd}
\newtheorem{theorem}{Theorem}
\newtheorem{lemma}{Lemma}
\newtheorem{proposition}{Proposition}
\newtheorem{definition}{Definition}
\newtheorem{assumption}{Assumption}
\newtheorem{remark}{Remark}
\title{Exponential moments for Hawkes processes under minimal assumptions} 
\author{Th\'eo~Leblanc \footnote{Universit\'e Paris Dauphine PSL, France. Email : theo.leblanc@dauphine.psl.eu }}
\date{}
\begin{document}

\maketitle

\begin{center}
    \textbf{Abstract}
    
    \begin{minipage}{0.8\linewidth}
    We prove that the number of points of a stationary linear Hawkes process lying in any bounded subset of the real line has exponential moments, without any other assumption than the one needed for existence of such stationary process, namely the spectral radius of the matrix of ${\mathbb L}^1$ norms of interaction functions is smaller than one. The proof relies on a mass transport principle argument. We also specify the dependence of the bounds with respect to the base rates and the matrix of ${\mathbb L}^1$ norms of interaction functions defining the Hawkes process and give a functional version of the result.\\

    \textbf{Keywords:} Hawkes processes; exponential moments; cluster representation.\\
    \textbf{MSC classifications:} 60G55; 60E15; 60J85.
\end{minipage}
\end{center}

\section{Introduction}

\subsection{Motivation}

Hawkes processes, introduced by Hawkes in 1971 in \cite{Hawkes71}, are modelling the continuous time arrivals of some random $0-1$ events when the arrivals depend on the past events. Hawkes processes can be self exciting or self inhibiting. This flexibility allows Hawkes processes to model a large variety of phenomena, from the time occurrences of earthquakes \cite{Otaga} to the activity of the neural network of the brain \cite{GalvesLoc, AgeDepHP}, to cite but of few. Existence of stationary linear Hawkes processes, which only models excitation, is guaranteed under the well known condition that the spectral radius of the matrix of ${\mathbb L}^1$ norms of interaction functions is smaller than one \cite{BréMas}. This condition is expressed in \Cref{A1} in \Cref{setting}. From a statistical point of view, having controls on the moments of Hawkes processes is key to construct robust estimation procedures of the interaction parameters of the process. Exponential moments of stationary Hawkes processes have been proved under conditions such as bounded memory interactions \cite{LassoHawkes,RBRoy} or decay assumptions on the interaction function in the univariate case \cite{ROUEFF20161710}. But, up to our knowledge, there are no results about the existence of exponential moments for Hawkes processes under the minimal spectral radius condition that guarantees the existence of stationary Hawkes processes. In this paper we prove that a stationary Hawkes process has exponential moments under the spectral radius assumption, regardless of the tails of the interaction functions. We also provide quantitative bound of the exponential moments.

Along this paper we denote $\R$ the set of real numbers, $\N$ the set of integers: $\N=\{0,1,2,\cdots\}$ and $\N^*=\N\setminus\{0\}$. 

\subsection{Setting}\label{setting}

A \textit{point process} on the real line $\R$ is a random set of points $N\subset \R$ such that almost surely, $N$ is locally finite. Given $\M=[\![1,M]\!]$ with $M$ a positive integer, a $\M$-\textit{multivariate point process} on $\R$ is the collection $(N^m)_{m\in\M}$ of random locally finite subsets of $\R$. Under mild conditions, a multivariate point process $(N^1,\cdots,N^M)$ is characterised by its stochastic intensity process $(\lambda^1,\cdots,\lambda^M)$, defined by
\begin{equation*}
    \P{N^m \ \text{has a point in} \ [t,t+dt] \cond \F_t} = \lambda^m_t dt, \quad m=1,\cdots,M,
\end{equation*}
where $(\F_t)_{t\in\R}$ is the filtration generated by $(N^1,\cdots,N^M)$. See \cite{PPQ} for more precise statements. Hawkes processes are specific point processes. To define them, we consider $\bm{\mu}:=(\mu_m)_{m\in\M} \subset \R_+^{\M}$ and $\bm{h} := (h^m_{m'})_{m,m'\in\M}$, with $h^m_{m'}:\R_+\longrightarrow\R_+$ measurable functions.

\begin{definition}\label{def hawkes}
    Let $(\Omega,\F,\mathds{P})$ a probabilistic space and a filtration $(\F_t)_{t\in\R}$. A Hawkes process with parameters $\M,\bm{\mu},\bm{h}$ is a multivariate point process $\bm{N}:=(N^m)_{m\in\M}$ such that
    \begin{itemize}
        \item $\bm{N}$ is adapted to $(\F_t)_{t\in\R}$,
        \item almost surely, for $m\neq m'$ in $\M$, $N^m$ has no common points with $N^{m'}$,
        \item $\bm{N}$ has a predictable intensity $\bm{\lambda}:=(\lambda^m)_{m\in\M}$ given by
        \begin{equation}\label{eq hawkes}
        \lambda^m_t = \mu_m + \sum_{m'\in M} \int_{-\infty}^{t^-} h^m_{m'}(t-s)dN^{m'}_s \quad m\in\M, \ t\in\R.
        \end{equation}
    \end{itemize}
\end{definition}

In the previous definition, the constants $\mu_m$ are called \textit{base rates}, modelling the spontaneous rates of arrivals of points. The functions $h^m_{m'}$ that take into account the influence of each point process on the others are called \textit{interaction functions}.

In \cite{BréMas}, Brémaud and Massoulié proved in particular that such a linear Hawkes process, in the sense of \Cref{def hawkes} exists and is stationary under the following assumption.

\begin{assumption}\label{A1}
    The spectral radius of the matrix $\bm{H}=(\Vert h^m_{m'} \Vert_1)_{m',m\in\M}$ is strictly smaller than $1$, ie $\spr(\bm{H})<1$.
\end{assumption}

As explained in the next section, this assumption is sufficient to have exponential moments.

\subsection{Main result}

Given a Hawkes process $\bm{N}$ and $B\subset\R$ we are interested in the number of points lying in $B$:
\begin{equation*}
    N(B) = \sum_{m\in\M} N^m(B)
\end{equation*}
where $N^m(B) := \card( N^m \cap B)$. To state our main result we need the following definition.

\begin{definition}\label{GE}
    Let $A$ a square matrix. We say that $A$ satisfies the property $\Ge{r}{K}$ with $r,K\geq 0$ and denote $A\in\Ge{r}{K}$ if
    \begin{equation*}
        \forall n\in\N^*, \ \Vvert A^n \Vvert_{\infty} \leq Kr^n,
    \end{equation*}
    where and $\Vvert \cdot \Vvert_{\infty}$ is the operator norm associated to the norm $\Vert \cdot \Vert_{\infty}$.
\end{definition}

The well known Gelfand's Theorem states that for any square matrix $A$, for any norm $\Vert \cdot \Vert$, we have $\Vert A^n \Vert ^{1/n} \xrightarrow[n\to\infty]{} \spr(A)$. Thus, for any $r>\spr(A)$ there exists $K<\infty$ such that $A\in \Ge{r}{K}$. It follows that under \Cref{A1}, there exist $r$ and $K$ with $\spr(\bm{H}) < r < 1$ and $0< K<\infty$ such that $\bm{H}\in \Ge{r}{K}$. Now we can state the main result of this paper.

\begin{theorem}\label{moment exp}
    Suppose that \Cref{A1} holds, ie the spectral radius of $\bm{H} = (\Vert h^m_{m'} \Vert_1)_{m',m}$ is strictly less than $1$. Then there exists $\xi>0$ depending only on $\bm{H}$ such that for all $B\subset \R$ bounded, we have
    \[ \E{e^{\xi N(B)}} <\infty.\]
    More precisely, if $B\subset [s,s+L)$ for some $s\in\R$ and $L>0$, and if $\bm{H}\in\Ge{r}{K}$ with $0< r< 1$ and $0\leq K<\infty$ then we have
    \begin{equation}\label{borne moment exp} \E{e^{u \xi_{r,K} N(B)}} \leq \exp\left( \left[\Big(\dfrac{1+r}{2r}\Big)^{u} -1\right]L \sum_{m\in M} \mu_m \right), \quad \text{for all} \ u\in[0,1]
    \end{equation}
    with 
    \begin{align*}
        &\xi_{r,K} = \dfrac{\log\Big(\dfrac{1+r}{2r}\Big)}{1+\dfrac{2K}{1-r}}.
    \end{align*}
\end{theorem}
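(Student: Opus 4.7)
The plan is to combine the Poisson cluster representation of the stationary Hawkes process with a Laplace-transform iteration, the point being that the hypothesis $\Ge{r}{K}$ replaces the naive (and possibly vacuous) one-step control by $\Vvert \bm{H}\Vvert_\infty$.

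Under \Cref{A1} the process admits the Hawkes--Oakes cluster representation: immigrants of each type $m$ form independent homogeneous Poisson processes of rate $\mu_m$, and each immigrant spawns an independent multitype age-dependent branching cluster in which a particle of type $m''$ at time $t$ produces type-$m$ offspring as a Poisson process of intensity $h^m_{m''}(\cdot - t)$ on $(t,\infty)$. Writing $Y^m(s, B) := \card(\mathcal{C}^m_s \cap B)$ for the number of descendants in $B$ of an immigrant of type $m$ at time $s$, Poisson superposition yields
\[
\E{e^{\xi N(B)}} = \exp\!\left(\sum_{m \in \M}\mu_m \int_\R (\E{e^{\xi Y^m(s, B)}}-1)\, ds\right).
\]
Translation invariance of the offspring kernels implies that $Y^m(s, B)$ has the same law as $Y^m(0, B - s)$, and $\mathcal{C}^m_0 \subset [0,\infty)$ almost surely. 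Combined with $B \subset [s_0, s_0 + L)$ and the change of variable $t = s_0 - s$, this reduces the theorem to showing, for every $m \in \M$,
\[
F^m := \int_\R (f^m(t)-1)\, dt \leq (e^{u\beta}-1) L, \qquad f^m(t) := \E{e^{\xi Y^m(0, [t,t+L))}},
\]
where $\xi := u\xi_{r,K}$ and $\beta := \log\bigl((1+r)/(2r)\bigr)$.

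The Poisson Laplace formula applied to the direct offspring of the cluster root gives the branching recursion
\[
\log f^m(t) = \xi \ind_{t\in(-L,0]} + \sum_{m'\in\M}\int_0^{\infty} h^{m'}_m(s)\bigl(f^{m'}(t-s)-1\bigr)\, ds.
\]
I work with the truncations $f^m_n$ (cluster restricted to generations $\leq n$), $a^{(n)}_m := \sup_t \log f^m_n(t)$, $F^{(n)}_m := \int(f^m_n - 1)\,dt$, all nondecreasing in $n$, passing to $n\to\infty$ by monotone convergence at the end. The goal is to prove by induction on $n$ both $a^{(n)}_m \leq u\beta$ and $F^{(n)}_m \leq (e^{u\beta}-1)L$. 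With $c := (e^{u\beta}-1)/(u\beta)$ and using $e^x-1 \leq cx$ on $[0,u\beta]$, taking the supremum of the recursion under the inductive hypothesis $a^{(n)} \leq u\beta\,\mathbf{1}$ gives $a^{(n+1)} \leq \xi\,\mathbf{1} + c\bm{H}a^{(n)}$ coordinatewise, which iterates to $a^{(n)} \leq \xi \sum_{k=0}^{n-1}(c\bm{H})^k \mathbf{1}$; the bound $\Vvert \bm{H}^k\Vvert_\infty \leq Kr^k$ then sums to
\[
\|a^{(n)}\|_\infty \leq \xi\left(1 + \frac{Kcr}{1-cr}\right),
\]
and an elementary computation shows that the value of $\xi_{r,K}$ is precisely the one that makes this right-hand side $\leq u\beta$, closing the induction. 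The integral bound is then obtained in parallel: integrating the recursion in $t$ (Fubini--Tonelli on nonnegative integrands) yields $\int \log f^m_n \,dt = \xi L + (\bm{H}F^{(n-1)})_m$, and combined with $f-1 \leq c\log f$ on $[0, u\beta]$ (now justified by the sup bound) one obtains $F^{(n)} \leq c\bigl(\xi L\,\mathbf{1} + \bm{H}F^{(n-1)}\bigr)$, which iterates via $\Ge{r}{K}$ in exactly the same way to the bound $(e^{u\beta}-1)L$.

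The delicate point is the uniform sup bound on $a^{(n)}$: a naive single-step estimate controls $\|a^{(n+1)}\|_\infty$ only through $\Vvert \bm{H}\Vvert_\infty$, which may exceed $1$ and cause the induction to blow up when $\bm{H}$ is strongly coupled or $K$ is large. The hypothesis $\Ge{r}{K}$ is exactly what allows the infinite iteration of $c\bm{H}$ to converge, via the Gelfand-type decay $\|\bm{H}^k \mathbf{1}\|_\infty \leq K r^k$, and the explicit value $\xi_{r,K}$ is tuned so that the resulting geometric series lies below $u\beta$ (equivalently $cr(3-r) \leq 2$), which is the source of the specific constants in the statement.
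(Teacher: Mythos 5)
Your proposal is correct, but its second half takes a genuinely different route from the paper. The first part coincides: the cluster representation plus Campbell's theorem reduce the problem to bounding $\sum_m \mu_m\int\big(\E{e^{\xi G^m([t,t+L))}}-1\big)dt$. The paper then proceeds in two steps: a product/mass-transport inequality ($\sum_n(y_n-1)\le\prod_n y_n-1$, \Cref{repartition of a cluster}) which bounds the sum over all length-$L$ translates by the exponential moment of the \emph{total} cluster size, and a separate Galton--Watson lemma (\Cref{moment exp GW}) bounding $\E{e^{t\card(\T)}}$ by a generation-wise increment induction. You never pass to the total cluster size: you write the one-step Laplace-functional recursion in the time variable, $\log f^m_{n+1}(t)=\xi\ind_{t\in(-L,0]}+\sum_{m'}\int_0^\infty h^{m'}_m(s)\big(f^{m'}_n(t-s)-1\big)ds$, and control simultaneously $\sup_t\log f^m_n$ (your analogue of the paper's GW lemma, with the chord bound $e^x-1\le cx$ on $[0,u\beta]$ replacing the paper's increment scheme with $r_1=(1+r)/2$) and $\int(f^m_n-1)dt$, the latter by integrating the recursion, where Fubini produces exactly $\Vert h^{m'}_m\Vert_1$ and the factor $L$ from $\int\ind_{(-L,0]}=L$; this replaces \Cref{repartition of a cluster} and, like it, uses no tail information beyond $\bm H$. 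What your route buys is a single unified iteration of $c\bm H$ (summed via $\Ge{r}{K}$) yielding the window bound and the integral bound at once; what the paper's route buys is a cleaner modular structure (the GW lemma is of independent interest) and constants derived without any extra inequality to check. Two points you should tighten: the geometric sum should run to $k=n$ (the residual term $(c\bm H)^{n}a^{(0)}$ with $a^{(0)}=\xi\bm 1_{\M}$ is absorbed since $a^{(0)}$ equals the $k=n$ term), which does not affect the bound; more importantly, the claim that the induction closes with the paper's $\xi_{r,K}$, i.e.\ $cr(3-r)\le 2$, is not a tautology of the definition of $\xi_{r,K}$ (which was tuned for the paper's scheme) and must be verified. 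It is true: in the worst case $u=1$ it reads $(1-r)(3-r)\le 4\log\frac{1+r}{2r}$, which holds on $(0,1)$ since $\varphi(r)=4\log\frac{1+r}{2r}-(1-r)(3-r)$ satisfies $\varphi(1)=0$ and $\varphi'(r)=4-2r-\frac{4}{r(1+r)}<0$ on $(0,1)$ (equivalent to $(1-r)(2-r^2)>0$); note the inequality is sharp to second order as $r\to1$, so this verification genuinely belongs in the proof rather than being dismissed as an elementary computation.
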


Some remarks are in order.

The bound given in \cref{borne moment exp} can be rewritten as follows:
\begin{equation}\label{borne moment exp RW} \E{e^{t N(B)}} \leq \exp\left( \Big[e^{t C_{r,K}}-1\Big] L \sum_{m\in M} \mu_m \right), \quad 0\leq t\leq \xi_{r,K}
\end{equation}
with $C_{r,K} = 1+\dfrac{2K}{1-r}$. Thus, the bound for $\E{e^{t N(B)}}$ is increasing with $K$ and $r$, which is the expected behaviour. 

In the univariate case, ie $\card(\M)=1$, the matrix $\bm{H}$ is in fact a scalar with value $\alpha = \Vert h \Vert_1$ where $h$ is the (only one) interaction function. Since $\bm{H}^n = (\alpha^n)$ it is straightforward that $\bm{H}\in \Ge{\alpha}{1}$. 
If $\bm{H}=0$, meaning that there are no interactions and the Hawkes process consists of independent homogeneous Poisson processes with rates $\mu_m$ for $m\in\M$, then we have $\bm{H}\in\Ge{r}{0}$ for all $r\in(0,1)$. In this case, the exponential moments of $N([0,L])$ can be explicitly calculated and compared to the bound \cref{borne moment exp} as follows:
\[ \exp\left( (e^t-1) L \sum_{m\in M} \mu_m \right) = \E{e^{tN([0,L])}} \underset{\text{by} \ \cref{borne moment exp}}{\leq} \exp\left( (e^{t}-1) L \sum_{m\in M} \mu_m \right) \quad t\geq0.\]
Indeed, $C_{r,0} = 1$ and $\xi_{r,0} \xrightarrow[r\to 0^+]{} +\infty$. Thus, in this particular case, the bound \cref{borne moment exp} becomes an equality.

The dependency on $M$, the cardinal of $\M$, is hidden in the constant $K$ of $\Ge{r}{K}$. Indeed, the operator norm $\Vvert \cdot \Vvert_{\infty}$ takes into account the dimension of the matrix in the following sense: for a square matrix $A$ of size $d\geq 1$, we have
\[ \Vvert A \Vvert_{\infty} = \max_{i \in [\![1,d]\!]} \sum_{j=1}^d \vert A_{ij} \vert.\]

\begin{remark}
    The constant $\xi_{r,K}$, which comes from \Cref{moment exp GW} which states exponential moments of Galton Watson processes with Poisson offspring measure, may be not optimal. In particular, in the simplest univariate case of a Galton Watson process $\T$ with offspring measure $\Pois(\alpha)$ where $0\leq \alpha<1$, if we denote $g(\xi) := \log \E{e^{\xi \card(\T)}}$ then for $\xi>0$, $g(\xi)<\infty$ if and only if the equation $x=\xi+\alpha(e^x-1)$ has a positive solution for $x$ and in this case $g(\xi)$ is given by the first positive solution (on $x$) of $x=\xi+\alpha(e^x-1)$. Given $\alpha$, finding the largest $\xi$ such that there exists a positive solution of the above equation cannot be solved with the usual functions. Thus the optimal $\xi_{r,K}$ can not be expressed in closed form. However, the above reasoning shows (with the proof of \Cref{moment exp}) that for each matrix $\bm{H}$ such that $\spr(\bm{H})<1$, there exists a $\xi_{\bm{H}}>0$ satisfying $\E{e^{\xi_{\bm{H}} N(B)}}<\infty$ for all $B$ bounded and $\E{e^{\xi N(B)}} = \infty$ for any $\xi>\xi_{\bm{H}}$ and $B$ non-zero interval.
\end{remark}

\subsection{Functional result}

Let $f$ a non negative real function and define 
\[N(f) := \sum_{m\in \M} \sum_{x\in N^m} f(x).\]
The following result states exponential moments of $N(f)$.

\begin{theorem}\label{functional}
    Let $f$ a non negative real function, and $T>0$. Suppose that $\bm{H}\in\Ge{r}{K}$ with $0<r<1$ and $0\leq K<\infty$. Then we have
    \[\E{e^{\xi N(f)}} \leq \exp\left( \bigg[ e^{\xi \vert f \vert^{\infty}_{1,T} C_{r,K}} -1\bigg] T \sum_{m\in M} \mu_m \right), \quad 0\leq \xi \leq \xi_{r,K} / \vert f \vert^{\infty}_{1,T}\]
    where $\xi_{r,K}$ and $C_{r,K}$ are defined in \Cref{moment exp} and 
    \[ \vert f \vert^{\infty}_{1,T} =  \sup_{t\in\R} \sum_{n\in\Z} f(t+nT).\]
\end{theorem}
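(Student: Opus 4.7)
My plan is to mimic the proof of \Cref{moment exp}, since the latter is exactly the specialization $f=\mathbf{1}_B$, $T=L$ of \Cref{functional} (note that $\vert \mathbf{1}_B\vert^{\infty}_{1,L}=1$ when $B\subset[s,s+L)$). First I would invoke the Hawkes--Oakes cluster representation: $\bm N=\bigsqcup_i C^{(M_i,T_i)}$, where $\{(T_i,M_i)\}_i$ is a marked Poisson process on $\R\times\M$ with intensity $\mu_m\,dt$ and $C^{(m,t)}$ denotes the cluster rooted at an immigrant of type $m$ at time $t$, namely a multitype Galton--Watson tree with Poisson offspring of mean matrix $\bm H$. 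Campbell's exponential formula then gives
\[
\E{e^{\xi N(f)}} = \exp\Big(\sum_{m\in\M}\mu_m\int_\R \bigl(\E{e^{\xi C^{(m,t)}(f)}} - 1\bigr)\,dt\Big),
\]
where $C^{(m,t)}(f) := \sum_{x\in C^{(m,t)}} f(x)$.

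Next I would use translation invariance $C^{(m,t)} \stackrel{d}{=} t+C^{(m,0)}$ and partition $\R=\bigsqcup_{n\in\Z}[nT,(n+1)T)$, writing $t=s+nT$ with $s\in[0,T)$, to obtain
\[
\int_\R \bigl(\E{e^{\xi C^{(m,t)}(f)}} - 1\bigr)\,dt
= \int_0^T \sum_{n\in\Z} \E{e^{\xi\sum_{y\in C^{(m,0)}} f(y+s+nT)} - 1}\,ds.
\]
The elementary inequality $\sum_i (e^{a_i}-1)\le e^{\sum_i a_i}-1$ for $a_i\ge0$ (proved by induction from $(e^a-1)(e^b-1)\ge0$ and extended to countable sums by monotone convergence), applied pointwise in the cluster and combined with $\sum_n f(z+nT)\le \vert f\vert^{\infty}_{1,T}$ for every $z\in\R$, gives
\[
\sum_n \bigl(e^{\xi\sum_y f(y+s+nT)} - 1\bigr)
\le e^{\xi\sum_y\sum_n f(y+s+nT)} - 1
\le e^{\xi\vert f\vert^{\infty}_{1,T}\,\card(C^{(m,0)})} - 1;
\]
taking expectation and integrating over $s\in[0,T)$ then yields
\[
\int_\R \bigl(\E{e^{\xi C^{(m,t)}(f)}} - 1\bigr)\,dt \le T\Big(\E{e^{\xi\vert f\vert^{\infty}_{1,T}\,\card(C^{(m,0)})}} - 1\Big).
\]

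The last and hardest ingredient is an exponential moment bound for the total progeny of the Galton--Watson tree with Poisson offspring of mean matrix $\bm H\in\Ge{r}{K}$, which is precisely \Cref{moment exp GW} (referenced in the remark following \Cref{moment exp}) and yields $\E{e^{\tilde\xi\,\card(C^{(m,0)})}}\le e^{\tilde\xi C_{r,K}}$ for $\tilde\xi\in[0,\xi_{r,K}]$. Applying it with $\tilde\xi=\xi\vert f\vert^{\infty}_{1,T}\le\xi_{r,K}$ and substituting into Campbell's formula produces the announced bound
\[
\E{e^{\xi N(f)}} \le \exp\Big(T\bigl(e^{\xi\vert f\vert^{\infty}_{1,T}C_{r,K}}-1\bigr)\sum_{m\in\M}\mu_m\Big).
\]
The real work is thus in the Galton--Watson exponential moment bound, which is inherited from the proof of \Cref{moment exp}; the functional generalization otherwise amounts only to the bookkeeping above (Campbell's formula, translation invariance of the clusters, and the elementary exponential inequality).
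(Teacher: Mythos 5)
Your proposal is correct and follows essentially the same route as the paper: cluster representation plus Campbell's formula, discretization of the time integral at scale $T$, the elementary inequality $\sum_i(e^{a_i}-1)\leq e^{\sum_i a_i}-1$ (the argument behind \Cref{repartition of a cluster}), the bound $\sum_n f(\cdot+nT)\leq \vert f\vert^{\infty}_{1,T}$, and finally \Cref{moment exp GW} applied with $t=\xi\vert f\vert^{\infty}_{1,T}$. No substantive difference from the paper's proof.
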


\Cref{functional} is an extension of \Cref{moment exp}. Indeed to recover the classical exponential moments, one can take $f=\ind_B$ and $T=L$.

The $\vert \cdot \vert^{\infty}_{1,T}$ is similar to a discrete $L^1$ norm for non negative functions with discretization scale $T$.

The proof of \Cref{functional} is similar to the proof of \Cref{moment exp} and is based on the same arguments. Thus we only give the key details of the proof.

Before giving the proof of \Cref{moment exp}, we recall in the next section the cluster representation of Hawkes processes, which is crucial to prove our result.

\section{Cluster representation}

The \textit{cluster representation}, introduced by Hawkes and Oakes in 1974 \cite{HawkesOakes}, is a powerful tool to study Hawkes processes. In a few words, the cluster representation means that, under \Cref{A1}, a Hawkes process as introduced in \Cref{def hawkes} can be seen as a branching Poisson process on homogeneous Poisson processes with rates $\mu_m$ for $m\in\M$. Points generated by the homogeneous Poisson processes are called immigrants points. Points generated by the branching Poisson processes are called offspring points. Each immigrant point may generate new points, called its children, due to the self exciting term of \cref{eq hawkes}. Then each child may also have some children and so on. These successive arrivals of offspring points, with its genealogy, is called a cluster. In the sequel we define more formally the clusters.

We use the Ulam Harris Neveu notation to label the individuals of trees. Let $\U = \cup_{n\in\N} (\N^*)^n$ with $(\N^*)^0=\{\varnothing\}$. A $\Pois(\bm{h})$-cluster with root of type $m_0\in\M$ and born at time $t_0\in\R$ is defined by $\big((u,\tp(u),\bd(u))\big)_{u\in\T}$ where
\begin{itemize}
    \item $\T\subset \U$ is a random tree, for $u\in\T$, $\tp(u)\in\M$ is the type of $u$ and $\bd(u)\in\R$ is the birth date of $u$.
    \item $\big((u,\tp(u))\big)_{u\in\T}$ is a $\Pois(\bm{H})$-Galton Watson tree with root $\varnothing$ of type $m_0$ (ie $\tp(\varnothing)=m_0$). It means that independently for each point $u\in\T$, if we denote $X(u,\tp(u),m)$ the number of children of $u$ with type $m\in\M$, then for $m\in\M$ the random variables $X(u,\tp(u),m)$ are independent and $X(u,\tp(u),m) \sim \Pois(\bm{H}_{\tp(u),m})$ for all $m\in\M$.
    \item Conditionally to $\big((u,\tp(u))\big)_{u\in\T}$, the birth dates are distributed as follows. For the root of $\T$ we have $\bd(\varnothing)=t_0$. Then, independently, for any $u,v\in\T$ with $v$ a child of $u$, we have
    \[\bd(v)-\bd(u) \sim \frac{h^{\tp(v)}_{\tp(u)}(s)}{ \itg h^{\tp(v)}_{\tp(u)}(t)dt} \ind_{s>0}ds.\]
\end{itemize}

Finally, let $\pi^m$ be independent Poisson processes on $\R$ with rates $\mu_m$ for $m\in\M$. They are the \textit{immigrant points}. Conditionally to $(\pi^m)_{m\in\M}$, let $G^m_x$ be independent $\Pois(\bm{h})$-clusters with root of type $m$ and born at time $x$ for $m\in\M$ and $x\in\pi^m$. The following result is known \cite{HawkesOakes} and provides the cluster representation for Hawkes processes.

\begin{proposition}\label{cluster rep}
    Under \Cref{A1}, if $(N^m)_{m\in\M}$ is a Hawkes process from \Cref{def hawkes}, then we have the following equality in distribution
    \begin{equation*}
        (N^m)_{m\in\M} = \big(\{ s\in\R \mid \exists m'\in\M, \exists x\in\pi^{m'}, \exists u\in \U \ \text{such that} \ (u,m,s)\in G^{m'}_x\}\big)_{m\in\M}
    \end{equation*}
    where the immigrants $(\pi^m)_{m\in\M}$ are independent Poisson point processes with rates $\bm{\mu}$.
    Equivalently, in distribution, $N^m$ is the collection of all the birth dates of any points of type $m$ of any clusters generated by the immigrants points. 
\end{proposition}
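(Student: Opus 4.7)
My plan is to define a candidate multivariate point process $(\tilde N^m)_{m\in\M}$ from the cluster construction, verify that it is almost surely locally finite, compute its predictable stochastic intensity with respect to its internal filtration, and then appeal to the uniqueness in law of point processes with a given predictable intensity in order to conclude that $(\tilde N^m)_{m\in\M}$ has the same law as $(N^m)_{m\in\M}$.

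First I would establish local finiteness, which uses \Cref{A1} crucially. The size of a single $\Pois(\bm h)$-cluster with root of type $m'$ is the total progeny of a multitype Galton--Watson tree with offspring mean matrix $\bm H$, whose coordinate-wise expectation is finite because $\sum_{n\geq 0}\bm H^n$ converges under \Cref{A1}. Since each $h^m_{m'}$ is supported on $\R_+$, descendants are born strictly after their ancestors, so only immigrants $x\in\pi^{m'}$ with $x\leq \sup B$ contribute to $\tilde N^m(B)$ for any bounded $B$. A Campbell-type computation using the stationarity of the immigrant Poisson processes and the distribution of the age of a cluster point (a convolution of the normalised birth-date densities $h^m_{m'}/\Vert h^m_{m'} \Vert_1$) then bounds $\E{\tilde N^m(B)}$ by a finite quantity, hence local finiteness almost surely.

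Next I would compute the stochastic intensity of $(\tilde N^m)_{m\in\M}$ with respect to its own natural filtration. The cleanest route is to condition on the history of all cluster points up to time $t$ and use the defining property of a $\Pois(\bm h)$-cluster: given a parent point $(u,\tp(u),s)$ with $s<t$, its type-$m$ children form, independently across parents and types, an inhomogeneous Poisson process on $[s,\infty)$ with intensity $h^m_{\tp(u)}(\cdot - s)$ — this combines the $\Pois(\bm H_{\tp(u),m})$ offspring count with the i.i.d.\ normalised birth-date density. Summing over all already-born points and adding the immigrant contribution $\mu_m$ reproduces exactly the right-hand side of \cref{eq hawkes}. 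The subtlety, which I expect to be the main obstacle, is to verify that this intensity, which is naturally predictable with respect to the larger filtration carrying the full genealogical information, coincides with the predictable projection onto the self-generated filtration of $\tilde{\bm N}$; this is a standard enlargement-of-filtration issue and follows from the mutual independence of offspring numbers and birth times across different parents and generations.

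Finally I would invoke the uniqueness-in-law part of the Brémaud--Massoulié result \cite{BréMas}: under \Cref{A1}, there is at most one law of a multivariate point process with predictable intensity given by \cref{eq hawkes}. Since both $\bm N$ and $\tilde{\bm N}$ satisfy that equation, they agree in distribution, which is precisely the content of \Cref{cluster rep}. An alternative route that bypasses the filtration subtlety is to compare Laplace functionals directly: the Laplace functional of $\tilde{\bm N}$ can be written via the branching structure as a fixed point of the operator naturally associated to the Hawkes intensity, and this operator's fixed point is unique on a suitable class of functionals thanks again to $\spr(\bm H)<1$.
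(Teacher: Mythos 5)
The paper itself does not prove \Cref{cluster rep}: it is quoted as a known result with a reference to Hawkes and Oakes \cite{HawkesOakes}, and the cluster construction is then simply used in the proof of \Cref{moment exp}. Your proposal is essentially the classical proof of that cited result, and its outline is sound: build $\tilde{\bm N}$ from immigrants plus i.i.d.\ clusters, check local finiteness (your bound on $\E{\tilde N^m(B)}$ via $\sum_{n\geq 0}\bm H^n<\infty$ under \Cref{A1} is exactly right), identify the conditional birth rate by observing that the type-$m$ children of a point of type $\tp(u)$ born at $s$ form an inhomogeneous Poisson process with intensity $h^m_{\tp(u)}(\cdot-s)$, and conclude by uniqueness in law.

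Two points deserve tightening. First, the filtration issue you flag goes in the easy direction and is not really an ``enlargement'' problem: the intensity you compute under the rich filtration carrying the genealogy is a functional of the birth times and types alone, hence it is already predictable with respect to the natural filtration of $\tilde{\bm N}$; a $\G$-intensity that is $\F^{\tilde{\bm N}}$-predictable is automatically an $\F^{\tilde{\bm N}}$-intensity, so no projection argument is needed. Second, the uniqueness statement in \cite{BréMas} is uniqueness of the \emph{stationary} law under \Cref{A1}, not uniqueness among all point processes on $\R$ satisfying \cref{eq hawkes}; to invoke it you should note that $\tilde{\bm N}$ is stationary (immediate, since the immigrant Poisson processes are stationary and the clusters are i.i.d.\ up to translation) and that the Hawkes process of \Cref{def hawkes} intended in \Cref{cluster rep} is the stationary one, as is implicit throughout the paper. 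With these two adjustments your argument is a complete and correct proof of the proposition, and arguably more self-contained than the paper's citation; the alternative Laplace-functional route you mention would also work and avoids intensity/compensator machinery altogether, at the cost of setting up the fixed-point argument for the functional equation.
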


\Cref{fig:cluster rep} illustrates the cluster representation.

\begin{figure}[htbp]
\begin{center}
\vspace{-1.5cm}
\begin{tikzpicture}[scale=0.7]

\draw[line width=1pt, fill=blue!50, shift={(10,0)}, rotate=-35, scale=1.8] (0,0) ..controls +(-1,2) and +(1,2).. (0,0);
\draw[line width=1pt, fill=blue!50, shift={(7,0)}, rotate=-27, scale=3] (0,0) ..controls +(-1,2) and +(1,2).. (0,0);
\draw[line width=1pt, fill=blue!50, shift={(6.6,0)}, rotate=-30, scale=1.5] (0,0) ..controls +(-1,2) and +(1,2).. (0,0);
\draw[line width=1pt, fill=blue!50, shift={(5.8,0)}, rotate=-50, scale=1] (0,0) ..controls +(-1,2) and +(1,2).. (0,0);
\draw[line width=1pt, fill=blue!50, shift={(4,0)}, rotate=-29, scale=0.5] (0,0) ..controls +(-1,2) and +(1,2).. (0,0);
\draw[line width=1pt, fill=blue!50, shift={(3,0)}, rotate=-27, scale=2.5] (0,0) ..controls +(-1,2) and +(1,2).. (0,0);
\draw[line width=1pt, fill=blue!50, shift={(1.5,0)}, rotate=-40, scale=2] (0,0) ..controls +(-1,2) and +(1,2).. (0,0);

\draw[line width=1.5pt, -stealth] (0,0)--(12.8,0);
\draw (13,-0.4) node {time};

\tikzstyle{every node}=[circle, draw, fill=red, inner sep=0pt, minimum width=6pt]

\draw (1.5,0) node {};
\draw (3,0) node {};
\draw (4,0) node {};
\draw (5.8,0) node {};
\draw (6.6,0) node {};
\draw (7,0) node {};
\draw (10,0) node {};

\end{tikzpicture}
\end{center}
\caption{Illustration of the cluster representation. The red dots are the immigrant points. They arrive according to the immigrant rate, being $\mu_m$ for type $m\in\M$, and give birth to clusters, represented by the blue shapes. In the end, only the birth dates and the types are relevant for Hawkes processes, which can be interpreted as a projection of the blue shapes on the time axis, in the sense that the tree structure is forgotten}
\label{fig:cluster rep}
\end{figure}
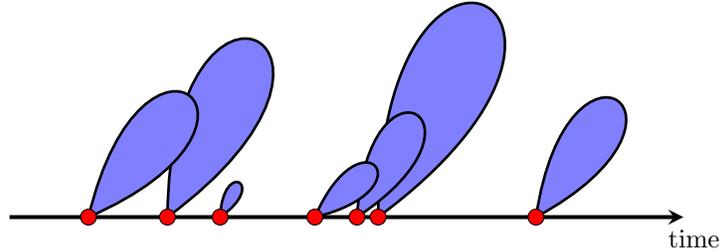

\section[Proof of Theorem 1.4]{Proof of \Cref{moment exp}}

In the sequel since \Cref{A1} is satisfied we fix $r\in(0,1)$ and $K\geq 0$ such that $\bm{H}\in\Ge{r}{K}$. Thanks to \Cref{cluster rep} we can express a Hawkes process in terms of clusters.

Without loss of generality, due to stationarity, we can suppose that $B=[0,L)$. Let $\xi = u\xi_{r,K}$ with $u\in[0,1]$ and $\xi_{r,K}$ defined in \Cref{moment exp}. Let $(\pi^m)_{m\in\M}$ be the immigrant points. For $x\in\pi^m$ let $G^m_x$ be the cluster associated to $x$. For any cluster $G$ and $A\subset \R$ we define
\[G(A) = \card\big( \{ s\in A \mid \exists (u,m)\in \U\times \M, \ (u,m,s)\in G\}\big).\]
Then we have from \Cref{cluster rep}
\[ N(B) = \sum_{m\in \M}  \sum_{x\in \pi^m} G^m_x([0,L)).\]
Until \cref{eq dem th1 1} the proof is an application of \cite{daley2003introduction} Example 8.3(c). For sake of completeness we give the few lines of calculation since it contains a step which is crucial for the proof. Since conditionally to the immigrants the clusters are independent, we have
\begin{align*}
\E{e^{\xi N(B)}} & = \E{\prod_{m\in \M} \prod_{x\in \pi^m} \E{e^{\xi G^m_x([0,L))} \cond (\pi^m)_{m\in\M}}}.
\end{align*}
Our proof relies on the following change of paradigm, which will lead to a mass transport principle \cite{Khezeli}, see \Cref{fig:mass transport 1}. For $m\in \M$, let $G^m$ a random variable with the law of a cluster with root of type $m$ and born at $t=0$. Then, since the clusters are invariant by time translation, we have for $m\in \M$ and $x\in\pi^m$,
\begin{equation}\label{chgt de temps}
    \E{e^{\xi G^m_x([0,L))} \cond (\pi^m)_{m\in\M}} = \E{e^{\xi G^m([-x,-x+L))}}=:g(-x,m,\xi,L).
\end{equation} 
The left hand side of the formula represents the quantity that immigrant $x$ gives to $B=[0,L)$. The right hand side represents what $B-x$ receives from an immigrant at $t=0$, see \Cref{fig:mass transport 1}. 

\begin{figure}[htbp]
\begin{center}
\hfill
\begin{minipage}[c]{0.45\linewidth}
\centering
\begin{tikzpicture}[scale=0.45]

\clip (0,-1.5) rectangle (13.5,7);

\draw[line width=1pt, fill=blue!50, shift={(8.5,0)}, rotate=-50, scale=1.5] (0,0) ..controls +(-1,2) and +(1,2).. (0,0);
\draw[line width=1pt, fill=blue!50, shift={(5.5,0)}, rotate=-68, scale=5.5] (0,0) ..controls +(-1,2) and +(0.2,2).. (0,0);
\draw[line width=1pt, fill=blue!50, shift={(2,0)}, rotate=-60, scale=6.6] (0,0) ..controls +(-0.5,2) and +(0.3,2).. (0,0);

\draw[line width=1.5pt, -stealth] (0,0)--(13.5,0);

\draw (2,-1) node {$x_{3}$};
\draw (5.5,-1) node {$x_{2}$};
\draw (8.5,-1) node {$x_{1}$};
\draw (9.5,-1) node {$0$};
\draw (11,-1) node {$L$};

\definecolor{aqua}{rgb}{0.0,1.0,1.0}
\begin{scope}
    \clip (9.5,0) rectangle (11,6);
    \fill[pattern={north west lines}] (9.5,0) rectangle (11,6);
    \draw[line width=1pt, fill=aqua, shift={(8.5,0)}, rotate=-50, scale=1.5] (0,0) ..controls +(-1,2) and +(1,2).. (0,0);
    \draw[line width=1pt, fill=aqua, shift={(5.5,0)}, rotate=-68, scale=5.5] (0,0) ..controls +(-1,2) and +(0.2,2).. (0,0);
    \draw[line width=1pt, fill=aqua, shift={(2,0)}, rotate=-60, scale=6.6] (0,0) ..controls +(-0.5,2) and +(0.3,2).. (0,0);
\end{scope}
    
\tikzstyle{every node}=[circle, draw, fill=red, inner sep=0pt, minimum width=6pt]

\draw[dashed] (9.5,0)--(9.5,6);
\draw[dashed] (11,0)--(11,6);

\draw (2,0) node {};
\draw (5.5,0) node {};
\draw (8.5,0) node {};

\end{tikzpicture}
    
\end{minipage}
\hfill
\begin{minipage}[c]{0.45\linewidth}
\centering
    
\begin{tikzpicture}[scale=0.45]

\clip (0,-1.5) rectangle (13.5,7);

\draw[line width=1pt, fill=blue!50, shift={(2,0)}, rotate=-60, scale=6.8] (0,0) ..controls +(-0.6,2) and +(0.5,2).. (0,0);

\draw[line width=1.5pt, -stealth] (0,0)--(13.5,0);

\draw (2,-1) node {$0$};
\draw (3,-1) node {$-x_{1}$};
\draw (6,-1) node {$-x_{2}$};
\draw (9.5,-1) node {$-x_3$};

\definecolor{aqua}{rgb}{0.0,1.0,1.0}
    
\begin{scope}
    \clip (9.5,0) rectangle (11,6);
    \fill[pattern={north west lines}] (9.5,0) rectangle (11,6);
    \draw[line width=1pt, fill=aqua, shift={(2,0)}, rotate=-60, scale=6.8] (0,0) ..controls +(-0.6,2) and +(0.5,2).. (0,0);
\end{scope}

\begin{scope}
    \clip (6,0) rectangle (7.5,6);
    \fill[pattern={north west lines}] (6,0) rectangle (7.5,6);
    \draw[line width=1pt, fill=aqua, shift={(2,0)}, rotate=-60, scale=6.8] (0,0) ..controls +(-0.6,2) and +(0.5,2).. (0,0);
\end{scope}

\begin{scope}
    \clip (3,0) rectangle (4.5,6);
    \fill[pattern={north west lines}] (3,0) rectangle (4.5,6);
    \draw[line width=1pt, fill=aqua, shift={(2,0)}, rotate=-60, scale=6.8] (0,0) ..controls +(-0.6,2) and +(0.5,2).. (0,0);
\end{scope}

\draw[dashed] (9.5,0)--(9.5,6);
\draw[dashed] (11,0)--(11,6);

\draw[dashed] (6,0)--(6,6);
\draw[dashed] (7.5,0)--(7.5,6);

\draw[dashed] (3,0)--(3,6);
\draw[dashed] (4.5,0)--(4.5,6);

\tikzstyle{every node}=[circle, draw, fill=red, inner sep=0pt, minimum width=6pt]

\draw (2,0) node {};

\end{tikzpicture}
    
\end{minipage}
\end{center}
    
\caption{Illustration of the mass transport principle. In expectation, both side are equal.}
\label{fig:mass transport 1}
\end{figure}
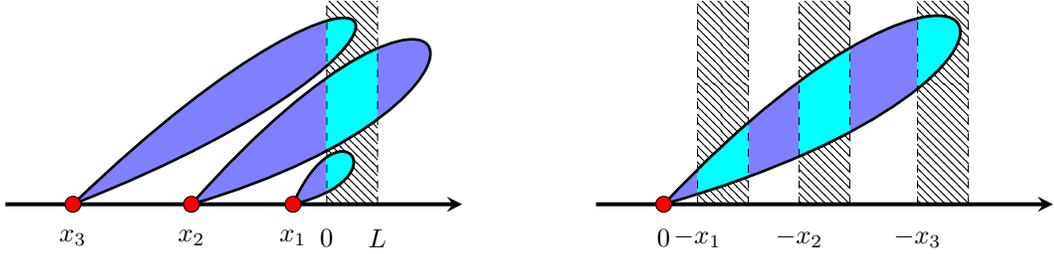
    
We can now use Campbell Theorem for Poisson processes (see \cite{PPK}, section 3.2):
\begin{align}
    \E{e^{\xi N(B)}} & = \E{\prod_{m\in \M}  \prod_{x\in \pi^m} g(-x,m,\xi,L) } \nonumber\\
    & = \exp\left( \sum_{m\in \M} \itg_{-\infty}^L \big[g(-t,m,\xi,L)-1\big] \mu_m dt\right)\nonumber\\
    & = \exp\left( \sum_{m\in \M} \itg_{-L}^{\infty} \big[g(t,m,\xi,L)-1\big] \mu_m dt\right).\label{eq dem th1 1}
\end{align}
It remains to prove that $L^{-1}\itg_{-L}^{\infty} \big[g(t,m,\xi,L)-1\big]dt$ is finite and more precisely, bounded by $((1+r)/2r)^{u}-1$. For this purpose, in the literature, it is classical to use assumptions on the tail of the interaction functions, such as bounded support assumptions, to control the integrand for each $t$ to prove that the integral is convergent, see \cite{LassoHawkes}. Here we will prove the bound on the integral regardless of the tail of the interaction functions and show that the relevant quantity is only the matrix $\bm{H}$ of the $L^1$ norms of the interaction functions. The following lemma is key and is made possible due to the mass transportation equality.

\begin{lemma}\label{repartition of a cluster}
    Let $L>0$ and a $\Pois(\bm{h})$-cluster $G^m$ from $m\in\M$ and started at $t=0$. Then, for any $s\in[-L,0)$ and any $\xi\geq 0$,
    \[ \sum_{n=0}^{\infty} \left( \E{e^{\xi G^m([s+nL,s+ (n+1)L))}} -1 \right) \leq \E{e^{\xi G^m([0,\infty))}}-1.\]
\end{lemma}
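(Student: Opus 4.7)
The plan is to reduce the claim to an elementary inequality by exploiting the fact that the cluster $G^m$ rooted at time $0$ lives almost surely in $[0,\infty)$. This is immediate from the cluster construction: the root is born at time $0$ and every offspring's birth date exceeds its parent's by a strictly positive amount (the densities $h^{m'}_m(s)/\Vert h^{m'}_m\Vert_1 \ind_{s>0}$ are supported on $(0,\infty)$). Hence, for $s\in[-L,0)$, the cluster contains no point in $[s,0)$.

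Given this, the half-open intervals $I_n := [s+nL, s+(n+1)L)$ for $n\in\N$ partition $[s,\infty)$, and letting $X_n := G^m(I_n)$ I have
\[ \sum_{n=0}^\infty X_n = G^m([s,\infty)) = G^m([0,\infty)) \quad \text{a.s.} \]
Subcriticality ($\spr(\bm{H})<1$) ensures that the cluster is finite almost surely, so only finitely many $X_n$ are nonzero with probability one.

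The key combinatorial step is the elementary inequality
\[ \prod_{n} (1+b_n) - 1 \geq \sum_{n} b_n \qquad (b_n\geq 0), \]
which follows by expanding the product and discarding the cross terms. Applied pathwise with $b_n = e^{\xi X_n}-1 \geq 0$ (valid since $\xi\geq 0$ and $X_n\in\N$) and using $e^{\xi \sum_n X_n} = \prod_n e^{\xi X_n}$, it gives
\[ e^{\xi G^m([0,\infty))} - 1 \geq \sum_{n=0}^\infty \bigl( e^{\xi X_n} - 1 \bigr) \quad \text{a.s.} \]
Taking expectations on both sides and interchanging expectation with the sum of non-negative terms via Tonelli's theorem then yields the claimed inequality.

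I do not anticipate any serious obstacle: the one subtlety is checking that $G^m$ is genuinely confined to $[0,\infty)$, so that the "extra'' interval $[s,0)$ contributes nothing on the left-hand side and the disjoint decomposition really equals $G^m([0,\infty))$. Once that is noted, the expansion-of-the-product inequality does all the work, and crucially turns the sum of small contributions (each close to $1$ for small $\xi$) into a telescoping-type bound against a single exponential moment of the full cluster size.
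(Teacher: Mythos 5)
Your proof is correct and takes essentially the same route as the paper's: the same elementary pathwise inequality $\prod_n(1+b_n)-1\geq\sum_n b_n$ applied with $b_n=e^{\xi G^m(I_n)}-1$, followed by an exchange of expectation and sum (Tonelli) and the identification $\sum_n G^m(I_n)=G^m([0,\infty))$. Your explicit check that the cluster is supported on $[0,\infty)$ is a detail the paper leaves implicit, and the appeal to subcriticality for a.s.\ finiteness is not even needed, since the inequality holds trivially when both sides are infinite.
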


\begin{proof}[Proof of \Cref{repartition of a cluster}]
    To prove this lemma, one just has to remark the following inequality. For any sequence $y_1,y_2,\cdots$ such that $y_n\geq 1$ for $n\geq 1$, we have
    \[ \sum_{n=1}^{\infty} (y_n-1) \leq -1 + \prod_{n=1}^{\infty} y_n.\]
    Indeed, it suffices to denote $x_n=y_n-1 \geq 0$ and expand the product.
        
    From this observation, we have
    \begin{align*}
        \sum_{n=0}^{\infty} \left( \E{e^{\xi G^m([s+nL,s+(n+1)L))\big)}} -1 \right) & = \E{ \sum_{n=0}^{\infty} \left( e^{\xi G^m([s+nL,s+(n+1)L))} -1 \right)}\\
        & \leq \E{ \prod_{n=0}^{\infty}  e^{\xi G^m([s+nL,s+(n+1)L))} }-1\\
        & = \E{e^{\xi G^m([0,\infty))}}-1.
    \end{align*}
This is the desired result.
\end{proof}

Going back to the proof of \Cref{moment exp}. Let us rewrite the integral as follows
\begin{equation}
    \itg_{-L}^{\infty} \big[g(t,m,\xi,L)-1\big]dt = \itg_{-L}^0 \sum_{n=0}^{\infty} \big[ g(t+nL,m,\xi,L)-1\big] dt.
\end{equation}
From \Cref{repartition of a cluster} we have that
\begin{equation*}
    \sum_{n=0}^{\infty} \big[ g(t+nL,m,\xi,L)-1\big] \leq \E{e^{\xi G^m([0,\infty))}}-1,
\end{equation*}
thus it follows that
\begin{equation}\label{n-1}
    \E{e^{\xi N(B)}} \leq \exp\left(  L \sum_{m\in M} \mu_m \E{e^{\xi  G^m([0,\infty))}-1} \right).
\end{equation}
It remains to control $\E{e^{ t G^m([0,\infty))}}$ with $t\geq 0$. At this point, the time embedding of $G^m$ is no more relevant since we have the equality in law 
\begin{equation}\label{cluster GW}
    G^m([0,\infty)) = \card(\T)
\end{equation}
where $\big((u,\tp(u))\big)_{u\in\T}$ is a $\Pois(\bm{H})$-Galton Watson tree with root of type $m$. The next lemma is the last piece of the proof.

\begin{lemma}\label{moment exp GW}
    Let $\big((u,\tp(u))\big)_{u\in\T}$ be a $\Pois(\bm{H})$-Galton Watson tree with root of any type. Suppose that $\bm{H}\in\Ge{r}{K}$ with $r\in(0,1)$ and $K\geq 0$, which means that
    \[ \Vvert \bm{H}^n \Vvert_{\infty} \leq K r^n, \quad n\in\N^*.\]
    Then we have,
    \[\log\left(\E{\exp\big(t \card(\T)\big)}\right) \leq  t \left[ 1+\frac{2K}{1-r}\right] = \frac{t}{\xi_{r,K}} \log\left(\dfrac{1+r}{2r}\right)\]
    for all $0\leq t\leq \xi_{r,K}$ with
    \[\xi_{r,K} = \dfrac{\log\left(\dfrac{1+r}{2r}\right)}{1+\dfrac{2 K}{1-r}}.\]
\end{lemma}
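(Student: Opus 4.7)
The plan is to bound the log-MGF of $\card(\T)$ by comparing it with a carefully chosen vector-valued super-solution of the fixed-point equation it satisfies. First I will derive the recursion: writing $T_m$ for $\card(\T)$ when the root has type $m$ and conditioning on the first generation, where the type-$m'$ offspring count is $\Pois(\bm{H}_{m,m'})$ independently, a standard Laplace-functional computation gives
\[
\phi_m(t) \;:=\; \log \E{e^{tT_m}} \;=\; t + \sum_{m'\in\M}\bm{H}_{m,m'}\bigl(e^{\phi_{m'}(t)}-1\bigr).
\]
In other words $\phi$ is a fixed point of the componentwise non-decreasing operator $\Phi : \psi \mapsto t\mathbf{1} + \bm{H}(e^\psi - \mathbf{1})$. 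Passing through trees truncated at depth $k$ shows that $\phi$ is the pointwise monotone limit of iterates $\phi^{(k)}$ with $\phi^{(k+1)} = \Phi(\phi^{(k)})$ and $\phi^{(0)} = t\mathbf{1}$, each of which is finite. By monotonicity of $\Phi$, it then suffices to exhibit a vector $\psi^* \geq t\mathbf{1}$ with $\Phi(\psi^*) \leq \psi^*$ and $\|\psi^*\|_\infty \leq Ct$, where $C = 1+2K/(1-r)$.

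The naive guess $\psi^* = Ct\mathbf{1}$ does not work, because $\Phi(Ct\mathbf{1}) \leq Ct\mathbf{1}$ would only use the $n=1$ instance of $\Ge{r}{K}$, i.e.\ $(\bm{H}\mathbf{1})_m \leq Kr$, which is too weak. Instead I will take $\beta^* := 2/(r(3-r))$ and
\[
\psi^* \;:=\; tv, \qquad v \;:=\; (I-\beta^*\bm{H})^{-1}\mathbf{1} \;=\; \sum_{n\geq 0}(\beta^*\bm{H})^n\mathbf{1},
\]
which is well defined since $\spr(\bm{H})\leq r$ (a consequence of $\Ge{r}{K}$ and Gelfand's formula) yields $\spr(\beta^*\bm{H})\leq 2/(3-r)<1$; in particular $v\geq \mathbf{1}$. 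The point of this choice is that $\Ge{r}{K}$ can now be applied termwise to all powers:
\[
\|v\|_\infty \;\leq\; 1 + \sum_{n\geq 1} K(\beta^* r)^n \;=\; 1 + \frac{K\beta^* r}{1-\beta^* r} \;=\; 1 + \frac{2K}{1-r} \;=\; C.
\]

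To check $\Phi(tv)\leq tv$, I will use that $x\mapsto (e^x-1)/x$ is non-decreasing on $(0,\infty)$: since $0\leq tv_{m'}\leq tC$, one obtains $e^{tv_{m'}}-1 \leq \beta(t)\, tv_{m'}$ with $\beta(t):=(e^{tC}-1)/(tC)$. Combined with the defining identity $\bm{H}v = (v-\mathbf{1})/\beta^*$, this gives $\Phi(tv)_m \leq t + t(\beta(t)/\beta^*)(v_m - 1) \leq tv_m$ as soon as $\beta(t)\leq \beta^*$. The hard part will be verifying this last scalar condition over the whole interval $[0,\xi_{r,K}]$. Since $\beta(t)$ is increasing in $t$, and since $\xi_{r,K}C = \log((1+r)/(2r))$ by definition of $\xi_{r,K}$, the condition reduces to the one-variable inequality
\[
\frac{(1-r)(3-r)}{4} \;\leq\; \log\!\Bigl(\frac{1+r}{2r}\Bigr), \qquad r\in(0,1),
\]
which I will prove by noting that both sides and their first derivatives agree at $r=1$ while the second derivative of the difference is strictly positive on $(0,1]$, so the difference is strictly positive on $(0,1)$. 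Once this scalar inequality is settled, $\psi^*=tv$ is a genuine super-solution, the monotone-convergence argument yields $\log \E{e^{t\card(\T)}} \leq \|tv\|_\infty \leq Ct$ on $[0,\xi_{r,K}]$, and using $tC = (t/\xi_{r,K})\log((1+r)/(2r))$ delivers exactly the stated bound.
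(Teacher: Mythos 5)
Your proposal is correct, and it reaches exactly the paper's constants $C_{r,K}=1+\frac{2K}{1-r}$ and threshold $\xi_{r,K}$, but by a genuinely different route. The paper works with the log-Laplace transforms $g^m_n(t)$ of the trees truncated at generation $n$, which satisfy the same recursion $g_{n+1}=t\bm{1}+\bm{H}(e^{g_n}-\bm{1})$ you write down, and then runs an induction on the increments $u_n=g_{n+1}-g_n$, unfolding the recursion so that $\Ge{r}{K}$ is applied to $\bm{H}^{n+1}\bm{1}_{\M}$ and proving the geometric bound $\Vert u_n(t)\Vert_\infty\leq tKr_1^n$ with $r_1=(1+r)/2$; summing the increments gives the bound. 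You instead exhibit an explicit subinvariant (super-solution) vector $\psi^*=t(I-\beta^*\bm{H})^{-1}\bm{1}$ for the monotone operator $\Phi$ and conclude by comparison along the monotone iterates; here $\Ge{r}{K}$ enters through the Neumann series $\sum_n(\beta^*\bm{H})^n\bm{1}$, which plays the role of the paper's unfolded powers, and the analytic work is concentrated in the single scalar inequality $\frac{(1-r)(3-r)}{4}\leq\log\frac{1+r}{2r}$ on $(0,1)$. I checked that inequality: with $D(r)=\log\frac{1+r}{2r}-\frac{(1-r)(3-r)}{4}$ one has $D(1)=D'(1)=0$ and $D''(r)=\frac{1}{r^2}-\frac{1}{(1+r)^2}-\frac12\geq\frac14>0$ on $(0,1]$, so $D>0$ on $(0,1)$ as you claim, and your choice $\beta^*=\frac{2}{r(3-r)}$ indeed gives $\frac{\beta^*r}{1-\beta^*r}=\frac{2}{1-r}$, hence $\Vert v\Vert_\infty\leq C_{r,K}$. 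The comparison-with-a-fixed-super-solution argument is arguably more structural and avoids tracking generation-by-generation increments, at the price of having to guess $\beta^*$ and verify the calculus inequality; the paper's induction is more hands-on but needs no such scalar lemma and makes the geometric decay of the contributions of successive generations explicit. Both proofs are valid over the full range $0\leq t\leq\xi_{r,K}$ (your endpoint check via monotonicity of $t\mapsto(e^{tC}-1)/(tC)$ is the right reduction), including the degenerate case $K=0$.
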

The proof of this result is postponed at the end of this paper.

Thus, up to \Cref{moment exp GW}, the proof is complete. Indeed from \Cref{moment exp GW} and \cref{cluster GW}, we have for $t=\xi=u\xi_{r,K}$,
\begin{equation}\label{n-2}
    \E{e^{\xi  G^m([0,\infty))}} \leq \left(\dfrac{1+r}{2r}\right)^{u}.
\end{equation}
Finally, injecting \cref{n-2} in \cref{n-1} concludes the proof of \Cref{moment exp}.

\subsection[Proof of Lemma 3.2]{Proof of \Cref{moment exp GW}}

It remains to prove \Cref{moment exp GW}. In \cite{suppDRR}, the authors proved a similar result, stating that a $\Pois(\bm{H})$-Galton Watson tree has exponential moments if the spectral norm of $\bm{H}$ is smaller than one. \Cref{moment exp GW} is stronger since we only need the spectral radius of $\bm{H}$ to be smaller than one (which is equivalent to require that $\bm{H}\in\Ge{r}{K}$ with some $r<1$). The proof is a bit more technical since we have to deal with the constant $K$, possibly $K>1$, from the property $\Ge{r}{K}$.

\begin{proof}[Proof of \Cref{moment exp GW}]
Let for $n\in\N$ and $m\in\M$,
\[g^m_n(t) = \log\left( \E{e^{t Z^m_n}}\right) \]
where $Z^m_n$ is the cardinal a $\Pois(\bm{H})$-Galton Watson tree with a root of type $m\in\M$ and clipped at generation $n$. 

The classical branching property of Galton Watson trees states the following. Given any point $u\in\T$, define the sub-tree $\T_{\geq u}$ as the sub-tree of $\T$ that contains $u$ and all its offspring (children, grandchildren and so on). Then we have, conditionally to $(u\in\T)$ and the type of $u$, the random variables $\big((v,\tp(v))\big)_{v\in\T_{\geq u}}$ and $\big((v,\tp(v))\big)_{v\in (\T \setminus \T_{\geq u}) \cup\{u\}}$ are independent and $\big((v,\tp(v))\big)_{v\in\T_{\geq u}}$ is distributed as a $\Pois(\bm{H})$-Galton Watson tree with root $u$ of type $\tp(u)\in\M$. 

Applying the branching property to all the points of the first generation (the children of the root) leads, for all $n\geq 0$, to the following equality in distribution
\[Z^m_{n+1} = 1 + \sum_{m'\in\M} \sum_{k=1}^{X_{m,m'}} Z^{m',k}_n\]
where $X_{m,m'} \sim \Pois(\bm{H}_{m,m'})$ and $Z^{m',k}_n \sim Z^{m'}_n$ and the random variables $X_{m,m'}, \ Z^{m',k}_n$ for $m'\in\M$, $k\geq 1$ are all independent. Thus we obtain that for all $m\in\M$,
\[ g_{n+1}^m(t) = t + \sum_{m'\in\M} \bm{H}_{mm'} (e^{g^{m'}_{n}(t)}-1).\]
One also has that
\[ g_{0}^m(t) = t.\]
Define $u_n^m(t) := g_{n+1}^m(t)-g_{n}^m(t) \geq 0$ and $u_n(t)$ the vector with coordinates $u_n^m(t)$ for $m\in\M$. Then
\[ u_0^m(t) = (e^t-1)\sum_{m'} \bm{H}_{mm'},\]
which can be written
\[ u_0(t) = (e^t-1)\bm{H} \bm{1}_{\M},\]
where $\bm{1}_{\M}$ is the vector with only ones. Using the relation between $g_n$ and $g_{n+1}$ gives
\begin{align*}
    u_{n+1}^m(t) & = \sum_{m'\in\M} \bm{H}_{mm'} (e^{g^{m'}_{n+1}(t)}-e^{g^{m'}_{n}(t)}) \\
    & \leq \sum_{m'\in\M} \bm{H}_{mm'} e^{g^{m'}_{n+1}(t)} (g^{m'}_{n+1}(t)-g^{m'}_{n}(t))\\
    & \leq \exp(\Vert g_{n+1}(t)\Vert_{\infty}) \sum_{m'\in\M} \bm{H}_{mm'} u^{m'}_{n}(t)\\
    & \leq \exp\left(t+\sum_{j=0}^n \Vert u_{j}(t)\Vert_{\infty}\right) \sum_{m'\in\M} \bm{H}_{mm'} u^{m'}_{n}(t),
\end{align*}
which becomes
\[ u_{n+1}(t) \leq \exp\left(t+\sum_{j=0}^n \Vert u_{j}(t)\Vert_{\infty}\right) \times \bm{H} \ u_{n}(t).\]
Unfolding the previous equation gives
\begin{align*}
    u_n(t) & \leq \exp\left(\sum_{i=1}^n \left[t+\sum_{j=0}^{i-1} \Vert u_{j}(t)\Vert_{\infty}\right]\right) \times \bm{H}^n  u_{0}(t)\\
    & = (e^t-1) \exp\left(\sum_{i=1}^n \left[t+\sum_{j=0}^{i-1} \Vert u_{j}(t)\Vert_{\infty}\right]\right) \times \bm{H}^{n+1} \bm{1}_{\M}.
\end{align*} 
Let $0\leq t \leq \log(1/r)$ and constants $K_1>0$, $r<r_1<1$ that will be fixed later. We will prove by induction on $n$ that $\Vert u_{n}(t)\Vert_{\infty} \leq t K_1 r_1^n$ if $t$ is small enough (the precise condition is given at the end).

\underline{$n=0:$} Since $t\leq \log(1/r)$, we have $e^t-1\leq t/r$ and by $\Ge{r}{K}$ we have $\Vert \bm{H} \bm{1}_{\M}\Vert_{\infty} \leq Kr^1 \times 1 = K r$. Thus if $K_1\geq K$ it works.

\underline{$0,1,\cdots,n-1 \to n:$} In this case we have on the one hand
\[ \sum_{i=1}^n \left[t+\sum_{j=0}^{i-1} \Vert u_{j}(t)\Vert_{\infty}\right] \leq n t \left( 1+\dfrac{K_1}{1-r_1}\right).\]
On the other hand by $\Ge{r}{K}$,
\[ \Vert \bm{H}^{n+1} \bm{1}_{\M}\Vert_{\infty} \leq K r^{n+1}.\]
Thus, since $e^t-1\leq t/r$,
\[\Vert u_{n}(t)\Vert_{\infty} \leq t K  \exp\left(n\left[ \log(r)+t\Big(1+\frac{K_1}{1-r_1}\Big)\right] \right).\]
The proof of the induction is complete if 
\begin{equation*}
    \left\{
    \begin{aligned}
        & K_1 \geq K \\
        & \log(r)+t\Big(1+\frac{K_1}{1-r_1}\Big) \leq \log(r_1) \ \Leftrightarrow \ t\leq \frac{\log(r_1/r)}{1+\frac{K_1}{1-r_1}} 
    \end{aligned}
    \right.
\end{equation*}
Thus we set $K_1 = K$ and arbitrarily choose $r_1 = (1+r)/2$. Thus the above bound for $t$ is exactly $\xi_{r,K}$, leading to, for all $t\leq \xi_{r,K}$, for all $n\geq 0$, we have $\Vert u_{n}(t)\Vert_{\infty} \leq t K r_1^n$. Going back to $g_n(t)$ leads to
\[ \forall m\in\M, \ \forall n\in\N, \ \forall t\leq \xi_{r,K}, \ g^m_n(t) \leq t\Big(1+\frac{K}{1-r_1}\Big).\]
Letting $n\to\infty$ gives the intended result by monotone convergence theorem.
\end{proof}

\subsection[Proof of Theorem 1.6]{Proof of \Cref{functional}}

By following the steps of the proof of \Cref{moment exp} one obtains
\begin{equation*}
    \E{e^{\xi N(f)}} = \exp\left( \sum_{m\in \M} \itg_{-\infty}^{\infty} \big[g(t,m,\xi,f)-1\big] \mu_m dt\right)
\end{equation*}
where
\[g(t,m,\xi,f) = \E{e^{\xi G^m(f\circ \tau_t)}}\]
and $G^m$ is a cluster born at time $t=0$ and with root of type $m$ and $\tau_t(x) = x+t, \ x\in\R$.
Then we discretize the integral at scale $T$,
\begin{equation*}
    \itg_{-\infty}^{\infty} \big[g(t,m,\xi,f)-1\big]  dt = \itg_{0}^{T} \sum_{n\in\Z} \big[g(t+nT,m,\xi,f)-1\big] dt.
\end{equation*}
The same argument as the one for the proof of \Cref{repartition of a cluster} leads to
\begin{equation*}
    \sum_{n\in\Z} \big[g(t+nT,m,\xi,f)-1\big] \leq \E{e^{\xi G^m(\sum_{n\in\Z} f\circ \tau_{t+nT})}}-1.
\end{equation*}
Since $\sum_{n\in\Z} f\circ \tau_{t+nT}\leq \vert f\vert^{\infty}_{1,T}$, it follows that
\begin{equation*}
    \E{e^{\xi N(f)}} \leq \exp\left( T \sum_{m\in \M} \mu_m\E{e^{\xi \vert f\vert^{\infty}_{1,T} G^m(\R)}-1} \right).
\end{equation*}
\Cref{moment exp GW} applied with $t=\xi \vert f\vert^{\infty}_{1,T}$ concludes the proof of \Cref{functional}.

\vspace{0.5\baselineskip}

\textbf{Acknowledgments.} I am grateful to my PhD supervisors, Vincent Rivoirard and Patricia Reynaud Bouret, for their advice and helpful comments. I would also like to thank the referee for his/her comments and pertinent questions, which have enabled me to significantly improve this paper.


\begin{thebibliography}{99}

\bibitem{PPQ}
Pierre Brémaud.
\newblock {\em Point Processes and Queues}.
\newblock Springer New York, 1981.

\bibitem{BréMas}
Pierre Brémaud and Laurent Massouli{\'e}.
\newblock Stability of nonlinear hawkes processes.
\newblock {\em The Annals of Probability}, pages 1563--1588, 1996.

\bibitem{daley2003introduction}
Daryl~J Daley, David Vere-Jones, et~al.
\newblock {\em An introduction to the theory of point processes: volume I:
  elementary theory and methods}.
\newblock Springer, 2003.

\bibitem{suppDRR}
Sophie Donnet, Vincent Rivoirard, and Judith Rousseau.
\newblock Supplement to “nonparametric bayesian estimation for multivariate
  hawkes processes”.
\newblock {\em The Annals of Statistics}, 48(5):2698 -- 2727, 2020.

\bibitem{GalvesLoc}
Antonio Galves and Eva L{\"o}cherbach.
\newblock Infinite systems of interacting chains with memory of variable
  length—a stochastic model for biological neural nets.
\newblock {\em Journal of Statistical Physics}, 151:896--921, 2013.

\bibitem{LassoHawkes}
Niels~Richard Hansen, Patricia Reynaud-Bouret, and Vincent Rivoirard.
\newblock Lasso and probabilistic inequalities for multivariate point
  processes.
\newblock {\em Bernoulli}, 21(1):83--143, 2015.

\bibitem{Hawkes71}
Alan~G. Hawkes.
\newblock Spectra of some self-exciting and mutually exciting point processes.
\newblock {\em Biometrika}, 58:83--90, 1971.

\bibitem{HawkesOakes}
Alan~G Hawkes and David Oakes.
\newblock A cluster process representation of a self-exciting process.
\newblock {\em Journal of applied probability}, 11(3):493--503, 1974.

\bibitem{Khezeli}
Ali Khezeli.
\newblock Unimodular random measured metric spaces and palm theory on them.
\newblock 2023.

\bibitem{PPK}
J~F~C Kingman.
\newblock {\em {Poisson Processes: Oxford Studies In Probability. 3}}.
\newblock Oxford University Press, 12 1992.

\bibitem{Otaga}
Yosihiko Ogata.
\newblock Statistical models for earthquake occurrences and residual analysis
  for point processes.
\newblock {\em Journal of the American Statistical association}, 83(401):9--27,
  1988.

\bibitem{AgeDepHP}
Mads~Bonde Raad, Susanne Ditlevsen, and Eva L{\"o}cherbach.
\newblock {Stability and mean-field limits of age dependent Hawkes processes}.
\newblock {\em Annales de l'Institut Henri Poincaré, Probabilités et
  Statistiques}, 56(3):1958--1990, 2020.

\bibitem{RBRoy}
Patricia Reynaud-Bouret and Emmanuel Roy.
\newblock Some non asymptotic tail estimates for hawkes processes.
\newblock {\em Bulletin of the Belgian Mathematical Society-Simon Stevin},
  13(5):883--896, 2007.

\bibitem{ROUEFF20161710}
François Roueff, Rainer {von Sachs}, and Laure Sansonnet.
\newblock Locally stationary hawkes processes.
\newblock {\em Stochastic Processes and their Applications}, 126(6):1710--1743,
  2016.

\end{thebibliography}
\end{document}